\newcommand{\R}{{\mathbb R}}
\newcommand{\dsum}{\displaystyle\sum}
\newcommand{\dlim}{\displaystyle\lim}
\DeclareMathOperator{\pV}{pV}
\newtheorem{theorem}{Theorem}[section]
\newtheorem{lemma}[theorem]{Lemma}
\newdefinition{example}[theorem]{Example}
\newtheorem{corollary}[theorem]{Corollary}
\newtheorem{proposition}[theorem]{Proposition}
\newdefinition{remark}[theorem]{Remark}
\newproof{prf}{Proof}
\numberwithin{equation}{section}
\newcommand{\sol}{\renewcommand{\proofname}{\it Solution}\begin{proof}}
\newcommand{\finsol}{\end{proof}}
\newcommand{\ope}[1]{\mathop{\mathrm{#1}}}
\def\b{\beta}
\def\s{\sigma}\def\de{\delta}
\newcommand{\field}[1]{\mathbb{#1}}
\def\R{\field{R}}\def\Z{\field{Z}}
\DeclareMathSymbol{\smallsetminus}  {\mathbin}{AMSb}{"72}
\DeclareMathSymbol{\pitchfork}    {\mathrel}{AMSa}{"74}
\DeclareMathSymbol{\preccurlyeq}  {\mathrel}{AMSa}{"34}
\DeclareMathSymbol{\lneqq}        {\mathrel}{AMSb}{"08}
\DeclareMathSymbol{\gneqq}        {\mathrel}{AMSb}{"09}
\DeclareMathSymbol{\nleq}         {\mathrel}{AMSb}{"02}
\DeclareMathSymbol{\twoheadrightarrow}  {\mathrel}{AMSa}{"10}
\DeclareMathSymbol{\subsetneq}      {\mathrel}{AMSb}{"28}
\DeclareMathSymbol{\lesseqqgtr}   {\mathrel}{AMSa}{"53}
\DeclareMathSymbol{\square}       {\mathord}{AMSa}{"03}
\DeclareMathSymbol{\gtrless}      {\mathrel}{AMSa}{"3F}
\begin{document}
\begin{frontmatter}
\title{Euler-Maclaurin formulas for functions of bounded variation}
\author[MUNIPD]{Giuseppe De Marco}
\ead{gdemarco@math.unipd.it}
\author{Marco De Zotti}
\ead{marco.dezotti@studenti.unipd.it}
\author[MUNIPD]{Carlo Mariconda}
\ead{carlo.mariconda@unipd.it}
\address[MUNIPD]{Dipartimento di Matematica, Universit\`a
degli Studi di Padova, Via Trieste 63, 35121 Padova, Italy}

\begin{abstract}
The first-order  Euler-Maclaurin formula relates the sum of the values of a smooth function on an interval of integers with its integral on the same interval on $\mathbb R$.
We formulate here the analogue  for functions that are just of bounded variation.
\end{abstract}
\begin{keyword}
Euler-Maclaurin \sep bounded variation \sep sums \sep series
\MSC[2010] Primary 65B15
\end{keyword}
\end{frontmatter}
\section*{Notation}
Our main reference for the basic facts and related notation  on BV functions is \cite{FP}.
Let us recall that a real valued function $f$ defined on an interval $I$  is of Bounded Variation (we often simply write BV) if the so-called \emph{pointwise variation} $\pV(f,I)$ of $f$ on $I$, given by
\[\pV(f,I):=\sup\left\{\sum_{0\le i<n}|f(t_{i+1})-f(t_i)|:\, t_i\in I, \, t_0<t_1<\cdots <t_n\right\}\]
is finite.
In this case  there exist two increasing and bounded functions $f_1, f_2:I\to \mathbb{R}$ satisfying
\begin{equation}\label{tag:bv}
f=f_1-f_2,\qquad \pV(f,I)=\pV(f_1,I)+\pV(f_2,I).
\end{equation}
In particular, every function of bounded variation is locally integrable.
The left and right limit of a BV function $f$ in $c$ will be denoted, respectively, $f(c^-)$ and $f(c^+)$.

We find useful here to adopt the following sum notation that is quite common in the field of Discrete Calculus: if $a<b$ are natural numbers we set
\[\dsum_{a\le k<b}f(k):=\dsum_{k=a}^{b-1}f(k).\]

Moreover, we set $\left[f\right]_a^b=f(b)-f(a)$.
\section{Introduction}
The first order Euler-Maclaurin formula for a smooth function $f:[a,b]\to\mathbb R$ ($a<b$ in $\Z$) states that
\begin{equation}\label{tag:em1}
\sum_{a\le k<b}f(k)=\int_a^bf(t)\,dt-\dfrac12\left[f\right]_a^b+R,\quad R=\int_a^bf'(t)B_1(t-[t])\,dt,
\end{equation}
where $B_1(t)=t-\dfrac12$ is the first Bernoulli polynomial. The formula is useful in the approximation of finite sums, and to relate the convergence of generalized integrals with that of numerical series:
we refer to \cite{MT, KP, L} for a survey on the subject.
Since $|B_1|\le\dfrac12$ on $[0,1]$ it follows that the remainder $R$ is bounded above by $\displaystyle\dfrac12\int_a^b|f'(t)|\,dt$, so that if $f$ is monotonic one has \[|R|\le \dfrac12|f(b)-f(a)|.\]
The proof is based on a simple, though smart,  integration by parts and begins assuming that $f$ is defined $[0,1]$: since $B_1'=1$, writing that
\[\int_0^1f(t)\,dt=\int_0^1f(t)B_1'(t)\,dt=[fB_1]_0^1-\int_0^1f'(t)B_1(t)\,dt\]
yields
\[f(0)=\int_0^1f(t)\,dt-\dfrac 12\left[f\right]_0^1+\int_0^1f'(t)B_1(t)\,dt,\]
which is (\ref{tag:em1}) when $a=0$ and $b=1$.

In Theorem~\ref{remark:critint4} we show that if $f$ is just of bounded variation (BV) on $[a,b]$ then (\ref{tag:em1}) holds with the exception that the remainder $R$ is bounded above by $\dfrac12\pV(f, [a,b])$. The proof of the result is elementary: indeed one can deal  with monotonic function, and adapt the  same arguments that are involved in the proof of  the  integral criterion for the convergence of a series with monotonic terms; part of the material arises from the  thesis \cite{DZ}.
In the final part of Section~\ref{subsect:monotonic} we obtain the results that follow the traditional Euler-Maclaurin formula for a smooth function, that is assumed here to be just BV: the approximation of the partial sums of the series $\dsum_{k=0}^Nf(k)$ in terms of $\dsum_{k=0}^nf(k)$ ($n<N$), the existence of the Euler constant with a related  asymptotic formula for $\dsum_{k=0}^nf(k)$ as $n\to +\infty$ and a generalization to BV functions of the integral test for the convergence of a series.

In Section~\ref{sec:3} we prove a version of \eqref{tag:em1} based on a partial integration formula for BV functions; in this formula the measure theoretic variation of the function is involved, which may be smaller than the point variation for discontinuous functions, and to deduce \eqref{tag:em1} from it we need to explicit the formula that connects the two variations; this is done in Proposition~\ref{prop:pVlambda}.

We are not aware of other formulations of the Euler-Maclaurin formulas for BV functions in the spirit of Theorem~\ref{remark:critint4}. Instead, the approximation formula for the sum  of a series (Corollary~\ref{es:criterio integrale}) was established in a more general setting in \cite{TrigubP}, \cite[4.1.5]{TrigubB} for functions whose $r$-th derivative is BV. The methods involved there arise from Fourier analysis, far from our elementary approach. A recent extension, comparing  in the multidimensional case the Fourier integral of a function of bounded variation and the corresponding trigonometric series with its Fourier coefficients
was recently established in \cite{Lifl}: we thank Elijah Liflyand for sharing the above results with us.

\section{A Euler-Maclaurin type formula for BV functions and its consequences}\label{subsect:monotonic}
\subsection{A Euler-Maclaurin type formula}
\begin{theorem}[Euler-Maclaurin type formula for BV functions]\label{remark:critint4}
Let $a,b$ in $\mathbb Z$ and $f:[a,b]\to\mathbb R$ be a function of bounded variation. Then
\begin{equation}\label{tag:critint_new}\begin{aligned}
\sum_{a\le k<b}f(k)&=\int_a^bf(x)\,dx-\dfrac12\left[f\,\right]_a^b+R,
&|R|\le \dfrac12\pV(f,[a,b]).\end{aligned}\end{equation}
\end{theorem}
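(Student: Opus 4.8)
The plan is to reduce the general BV statement to the case of a single increasing function on a unit interval, exactly as in the proof of the integral test. First I would invoke the decomposition \eqref{tag:bv}: write $f=f_1-f_2$ with $f_1,f_2\colon[a,b]\to\R$ increasing and $\pV(f,[a,b])=\pV(f_1,[a,b])+\pV(f_2,[a,b])$. The summation $\sum_{a\le k<b}f(k)$, the integral $\int_a^bf$, and the boundary term $[f]_a^b$ are all linear in $f$, so if \eqref{tag:critint_new} holds for each $f_i$ with a remainder $R_i$ obeying $|R_i|\le\frac12\pV(f_i,[a,b])$, then it holds for $f$ with $R=R_1-R_2$ and, by the triangle inequality,
\[ |R|\le|R_1|+|R_2|\le\tfrac12\bigl(\pV(f_1,[a,b])+\pV(f_2,[a,b])\bigr)=\tfrac12\pV(f,[a,b]). \]
Thus it suffices to treat an increasing $f$, for which $\pV(f,[a,b])=f(b)-f(a)$.

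Next, for increasing $f$ I would split the interval into its integer pieces. Since $f$ is BV it is locally integrable, so $\int_a^bf=\sum_{a\le k<b}\int_k^{k+1}f$, while $[f]_a^b=\sum_{a\le k<b}\bigl(f(k+1)-f(k)\bigr)$ telescopes. Defining on each unit interval
\[ R_k:=\frac{f(k)+f(k+1)}{2}-\int_k^{k+1}f(x)\,dx, \]
one checks by summing that $\sum_{a\le k<b}f(k)=\int_a^bf-\frac12[f]_a^b+\sum_{a\le k<b}R_k$, so \eqref{tag:critint_new} holds with $R=\sum_{a\le k<b}R_k$. The key estimate is the purely monotone one: because $f$ is increasing, $f(k)\le f(x)\le f(k+1)$ for every $x\in[k,k+1]$, hence $f(k)\le\int_k^{k+1}f\le f(k+1)$, which forces $|R_k|\le\frac12\bigl(f(k+1)-f(k)\bigr)$. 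Summing and telescoping gives $|R|\le\frac12\bigl(f(b)-f(a)\bigr)=\frac12\pV(f,[a,b])$, completing the monotone case.

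The argument is elementary and uses no continuity of $f$: the identity defining $R_k$ is algebraic, and the estimate only needs monotonicity at the sample points. The one place where care is needed is the sharpness of the constant $\frac12$. It rests on the two-sided enclosure $f(k)\le\int_k^{k+1}f\le f(k+1)$; a one-sided bound, as in the crude integral test, would only recover the full variation rather than half of it. It also rests on using the specific decomposition \eqref{tag:bv}, in which the pointwise variations add, so that the two halves recombine into exactly $\frac12\pV(f,[a,b])$. An arbitrary writing of $f$ as a difference of increasing functions would inflate the constant, so the additivity built into \eqref{tag:bv} is the delicate ingredient I expect to keep in sight.
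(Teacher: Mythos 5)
Your proposal is correct and follows essentially the same route as the paper: reduction to the monotone case via the decomposition \eqref{tag:bv} with additive pointwise variations, then the two-sided enclosure $f(k)\le\int_k^{k+1}f(x)\,dx\le f(k+1)$ combined with centering by half the increment to obtain the constant $\tfrac12$. The only difference is bookkeeping --- you bound a per-interval remainder $R_k$ and sum, while the paper sums the inequalities over $k$ first and then subtracts $\tfrac12(f(b)-f(a))$ globally --- and the two computations are equivalent.
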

\begin{proof}
Assume first that $f$ is monotonic increasing.
On every interval $[k, k+1]$ ($k\in\mathbb Z$) contained in $[a,b]$ one has
\[f(k)\le f(t)\le f(k+1)\qquad \forall t\in [k, k+1],\]
from which it follows that
\[f(k)=\int_k^{k+1}f(k)\,dt\le \int_k^{k+1}f(t)\,dt\le \int_k^{k+1}f(k+1)\,dt=f(k+1).\]
Summing the terms of the foregoing inequalities, as $k$ varies between $a$ and $b-1$, one obtains
\[\dsum_{a\le k<b}f(k)\le \int_a^bf(t)\,dt\le \dsum_{a\le k<b}f(k)+f(b)-f(a).\]
Subtracting the term $\dfrac 12(f(b)-f(a))$ from the  members of the preceding inequalities one finds
\[\begin{aligned}\dsum_{a\le k<b}f(k)-\dfrac 12(f(b)-f(a))&\le \int_a^bf(t)\,dt-\dfrac 12(f(b)-f(a))\\
&\le \dsum_{a\le k<b}f(k)+\dfrac 12(f(b)-f(a)),\end{aligned}\]
from which the conclusion follows.

\noindent
If $f$ is of bounded variation, let $f_1,f_2$ be as in (\ref{tag:bv}): since
\[\dsum_{a\le k<b}f_i(k)=\int_a^bf_i(x)\,dx-\dfrac12\left[f_i\,\right]_a^b+R_i, \quad |R_i|\le \dfrac12\pV(f_i,[a,b]) \quad (i=1,2)\]
by subtracting term by term we get
\[\dsum_{a\le k<b}f(k)=\int_a^bf(x)\,dx-\dfrac12\left[f\,\right]_a^b+R,\qquad R=R_1-R_2,\]
so that
\[|R|\le |R_1|+|R_2|\le \dfrac12\pV(f_1,[a,b])+\dfrac12 \pV(f_2,[a,b])=\dfrac12\pV(f,[a,b]).\]
\end{proof}
\begin{remark}\label{rem:monotonic} If $f$ is monotonic on $[a,b]$ then $\pV(f,[a,b])=|f(b)-f(a)|$, the remainder term $R$ can be thus estimated by $\dfrac12|f(b)-f(a)|$: this fact is well known as a consequence of the Euler-Maclaurin formula when $f$ is monotonic or of class $C^1$ \cite{MT}.
\end{remark}
\begin{corollary}[The approximation formula for finite sums]\label{coro:stimasummonotonic}
Let $f:[0,+\infty[\to\mathbb R$ be of bounded variation. For every $N\ge n$ the following \textbf{approximation formula} holds:

\begin{equation}\label{tag:stima_ridotte_monotone}\begin{aligned}
\dsum_{0\le k<N}f(k)&=\dsum_{0\le k<n}f(k)+\int_n^Nf(x)\,dx-\dfrac12\left[f\,\right]_n^N+\varepsilon_1(n,N),\\
&|\varepsilon_1(n,N)|\le \dfrac12\pV(f,[n,N])\le \dfrac12\pV(f,[n, +\infty[).\end{aligned}\end{equation}

\end{corollary}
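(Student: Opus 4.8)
The plan is to read off the result directly from Theorem~\ref{remark:critint4} applied on the integer interval $[n,N]$. Since $f$ is of bounded variation on $[0,+\infty[$, its restriction to $[n,N]$ is \emph{a fortiori} BV, and $n\le N$ are integers (the case $n=N$ being trivial, as every term below vanishes), so the hypotheses of the theorem are met. This yields
\[
\dsum_{n\le k<N}f(k)=\int_n^Nf(x)\,dx-\dfrac12\left[f\,\right]_n^N+R,\qquad |R|\le\dfrac12\pV(f,[n,N]).
\]

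Next I would split the sum over $[0,N[$ at the index $n$, writing $\dsum_{0\le k<N}f(k)=\dsum_{0\le k<n}f(k)+\dsum_{n\le k<N}f(k)$, and substitute the expression above for the second summand. Setting $\varepsilon_1(n,N):=R$ then gives exactly the claimed identity together with the first bound $|\varepsilon_1(n,N)|\le\frac12\pV(f,[n,N])$.

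Finally, the remaining inequality $\pV(f,[n,N])\le\pV(f,[n,+\infty[)$ is just the monotonicity of the pointwise variation with respect to the interval: any admissible choice $t_0<\cdots<t_m$ in $[n,N]$ is also an admissible choice in $[n,+\infty[$, so the supremum defining $\pV(f,[n,N])$ is taken over a subfamily of those defining $\pV(f,[n,+\infty[)$, whence the inequality.

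I expect no real obstacle here: the corollary is essentially a restatement of Theorem~\ref{remark:critint4} on a shifted interval, and the only points deserving a word are the harmless edge case $N=n$ and the monotonicity of $\pV(\cdot,I)$ in its interval argument, both immediate from the definitions.
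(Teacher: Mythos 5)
Your proof is correct and is essentially identical to the paper's: both split the sum as $\sum_{0\le k<N}f(k)-\sum_{0\le k<n}f(k)=\sum_{n\le k<N}f(k)$ and apply Theorem~\ref{remark:critint4} with $a=n$, $b=N$. The extra remarks you make (the trivial case $N=n$ and the monotonicity of $\pV(f,\cdot)$ in the interval) are fine but were left implicit in the paper.
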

\begin{proof} It is enough to remark that
\[\dsum_{0\le k<N}f(k)-\dsum_{0\le k<n}f(k)=\dsum_{n\le k<N}f(k)\]
and to apply (\ref{tag:critint_new}) with  $a=n$ and $b=N$.
\end{proof}

\subsection{A generalization of the integral criterion for the convergence of a series}
Let $f:[0, +\infty[\to\mathbb R$ be locally integrable. We set
\[\gamma^f_n\index{$\gamma^f_n$}:=\sum_{0\le k<n}f(k)-\int_0^nf(x)\,dx\qquad\forall n\in\mathbb N.\]
Notice that, if $f$ is of bounded variation, then $f(\infty):=\displaystyle\lim_{x\to +\infty}f(x)$ exists and is finite.
\begin{theorem}[The Euler constant]\label{Maclaurin1monotasint}
Let $f:[0, +\infty[\to \mathbb R$ be of bounded variation.
The Euler constant of $f$ defined by
$\gamma^f:=\displaystyle\lim_{n\to +\infty}\gamma^f_n$ exists and is finite, and the  following \textbf{estimate}\index{Euler!costant!approssimation!monotonic functions} of $\gamma^f$ holds:

\begin{equation}\label{tag:approx_eulero_monotonic}
\gamma^f=\gamma^f_n-\dfrac12\left[f\,\right]_n^{\infty}+\varepsilon_1(n),\quad
|\varepsilon_1(n)| \le \dfrac{1}{2}\pV(f,[n, +\infty[)\qquad \forall n\in\mathbb N.
\end{equation}

\end{theorem}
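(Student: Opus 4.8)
The plan is to extract both the convergence of $\gamma^f_n$ and the estimate directly from the approximation formula of Corollary~\ref{coro:stimasummonotonic}, by letting $N\to+\infty$. First I would subtract the definitions of $\gamma^f_N$ and $\gamma^f_n$ for $N\ge n$, which gives
\[\gamma^f_N-\gamma^f_n=\dsum_{n\le k<N}f(k)-\int_n^Nf(x)\,dx,\]
and then substitute \eqref{tag:stima_ridotte_monotone} (applied with lower endpoint $n$ and upper endpoint $N$) to obtain the key identity
\[\gamma^f_N-\gamma^f_n=-\dfrac12\left[f\,\right]_n^N+\varepsilon_1(n,N),\qquad |\varepsilon_1(n,N)|\le\dfrac12\pV(f,[n,N]).\]
Both assertions of the theorem will be read off from this identity.

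Next I would prove that $(\gamma^f_n)$ converges by showing it is a Cauchy sequence. Writing the identity above for two indices $N>M$, with $M$ in the role of $n$, yields
\[\gamma^f_N-\gamma^f_M=-\dfrac12\bigl(f(N)-f(M)\bigr)+\varepsilon_1(M,N),\qquad |\varepsilon_1(M,N)|\le\dfrac12\pV(f,[M,N]).\]
Since $f$ is BV, $f(\infty)=\lim_{x\to+\infty}f(x)$ exists and is finite, so $f(N)-f(M)\to 0$ as $M,N\to+\infty$; moreover, by additivity of the pointwise variation over adjacent intervals, $\pV(f,[M,N])\le\pV(f,[M,+\infty[)=\pV(f,[0,+\infty[)-\pV(f,[0,M])\to 0$ as $M\to+\infty$. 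Hence $(\gamma^f_n)$ is Cauchy and $\gamma^f:=\lim_{n\to+\infty}\gamma^f_n$ exists and is finite.

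Finally I would fix $n$ and let $N\to+\infty$ in the key identity of the first step. The left-hand side tends to $\gamma^f-\gamma^f_n$, while $-\tfrac12\left[f\,\right]_n^N=-\tfrac12(f(N)-f(n))\to-\tfrac12(f(\infty)-f(n))=-\tfrac12\left[f\,\right]_n^{\infty}$; consequently $\varepsilon_1(n):=\lim_{N\to+\infty}\varepsilon_1(n,N)$ exists and
\[\gamma^f=\gamma^f_n-\dfrac12\left[f\,\right]_n^{\infty}+\varepsilon_1(n).\]
The bound follows by passing to the limit in $|\varepsilon_1(n,N)|\le\tfrac12\pV(f,[n,N])\le\tfrac12\pV(f,[n,+\infty[)$, whose right-hand side is independent of $N$, so that $|\varepsilon_1(n)|\le\tfrac12\pV(f,[n,+\infty[)$.

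The only genuinely delicate point is the convergence of $(\gamma^f_n)$: it rests on the fact that the tail variation $\pV(f,[M,+\infty[)$ tends to $0$ as $M\to+\infty$, which I would deduce from the finiteness of $\pV(f,[0,+\infty[)$ together with additivity of the variation. Everything else reduces to substituting the approximation formula and letting $N\to+\infty$, the estimate surviving the limit precisely because its bound is uniform in $N$.
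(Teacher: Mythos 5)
Your proposal is correct and takes essentially the same route as the paper: your key identity $\gamma^f_N-\gamma^f_n=-\tfrac12\left[f\,\right]_n^N+\varepsilon_1(n,N)$ is exactly the paper's starting point (obtained there directly from Theorem~\ref{remark:critint4}, by you via Corollary~\ref{coro:stimasummonotonic}, which is the same application), the convergence of $(\gamma^f_n)$ is established by the same Cauchy argument based on the finiteness of $f(\infty)$ and the vanishing of the tail variation $\pV(f,[M,+\infty[)$, and the estimate follows by letting $N\to+\infty$ at fixed $n$. There are no gaps.
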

\begin{proof}
Given $n,N\in\mathbb N$ with $N>n$, by Theorem~\ref{remark:critint4} we have
\begin{equation}\label{tag:bepi}
\gamma_N^f-\gamma_n^f=\dsum_{n\le k<N}f(k)-\int_n^Nf(x)\,dx=-\dfrac12\left[f\,\right]_n^N+R(n,N),
\end{equation}
with $|R(n,N)|\le \dfrac12\pV(f,[n,N])$.

\noindent
Since the limits $\dlim_{N\to +\infty}f(N)$ and $\dlim_{N\to +\infty}\pV(f,[0,N])=\pV(f,[0, +\infty[)$ are both finite, and
$\pV(f,[n,N])=\pV(f,[0,N])-\pV(f,[0,n])$, it follows from   the necessary part of the Cauchy convergence criterion  that
\[\dlim_{n,N\to +\infty}-\dfrac12\left[f\,\right]_n^N+R(n,N)=0.\]
The sufficiency part of the very same criterion thus implies that the limit $\dlim_{n\to +\infty}\gamma^f_n$ exists and is finite. Passing to the limit in (\ref{tag:bepi}) we get
\[\gamma^f-\gamma_n^f=\dsum_{n\le k<N}f(k)-\int_n^Nf(x)\,dx=\dfrac12(f(\infty)-f(n))+\varepsilon_1(n),\]
where $\varepsilon_1(n):=\dlim_{N\to +\infty}R(n,N)$ is dominated by $\dfrac12\pV(f,[n,+\infty[)$.
\end{proof}
An immediate consequence of Theorem~\ref{Maclaurin1monotasint} is the following generalization of the well known integral criterion for the convergence of the series $\dsum_{k=0}^{\infty}f(k)$ for bounded and monotonic functions.
\begin{corollary}[Integral criterion for series and approximation of its sum]\label{es:criterio integrale}\index{criterion!integral!series with monotonic terms} Let $f:[0,+\infty[\to \mathbb R$ be of bounded variation.
\begin{enumerate}
\item
The series
$\dsum_{k=0}^{\infty}f(k)$ and the generalized integral $\displaystyle\int_0^{+\infty}f(x)\,dx$ have the same behavior: both are either convergent or divergent.
\item Assume that the series
$\dsum_{k=0}^{\infty}f(k)$ converges.
For every $n\in\mathbb N$ the following \textbf{approximation }\index{series!monotonic terms!sum!approximation} holds:

\begin{equation}\label{tag:stima_ridotte_monotone_convergenti}\begin{aligned}
\dsum_{k=0}^{\infty}f(k)&=\dsum_{0\le k<n}f(k)+\int_n^{+\infty}f(x)\,dx-\dfrac12\left[f\,\right]_n^{\infty}+\varepsilon_1(n),\\
&|\varepsilon_1(n)|\le \dfrac12\pV(f,[n,+\infty[).\end{aligned}\end{equation}
\end{enumerate}
\end{corollary}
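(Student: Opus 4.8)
The plan is to read both statements off Theorem~\ref{Maclaurin1monotasint}, whose proof already does the real work. The starting point is the identity
\[\dsum_{0\le k<n}f(k)=\gamma^f_n+\int_0^nf(x)\,dx\qquad(n\in\mathbb N),\]
which is just the definition of $\gamma^f_n$ rearranged, combined with the fact that $\gamma^f_n\to\gamma^f$ with $\gamma^f$ finite.

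For part (1), the displayed identity shows immediately that the partial sums $\dsum_{0\le k<n}f(k)$ converge if and only if the integrals $\int_0^nf(x)\,dx$ over integer endpoints converge, since the two sequences differ by the convergent sequence $\gamma^f_n$. It then remains to check that convergence of $\int_0^nf$ along integers is equivalent to convergence of the generalized integral $\int_0^{+\infty}f$ along real endpoints. Here I would use that $f(\infty)=\dlim_{x\to+\infty}f(x)$ exists and is finite: if $f(\infty)\neq 0$ then both the series (whose terms do not tend to $0$) and the integral (whose integrand is eventually bounded away from $0$, with constant sign) diverge, so there is nothing to prove; if $f(\infty)=0$ then, writing a real $X$ as $X=n+\theta$ with $\theta\in[0,1[$, the tail $\int_n^Xf(x)\,dx$ is bounded in absolute value by $\sup_{[n,n+1]}|f|$, which tends to $0$, so the two notions of integral convergence coincide. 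This tail estimate is the one genuinely analytic point, and I expect it to be the main (minor) obstacle; the rest of part (1) is formal.

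For part (2), assume the series converges; by part (1) so does $\int_0^{+\infty}f$, and letting $n\to+\infty$ in the displayed identity yields
\[\gamma^f=\dsum_{k=0}^{\infty}f(k)-\int_0^{+\infty}f(x)\,dx.\]
I would then substitute this value of $\gamma^f$ into the estimate $\gamma^f=\gamma^f_n-\dfrac12\left[f\,\right]_n^{\infty}+\varepsilon_1(n)$ furnished by Theorem~\ref{Maclaurin1monotasint}, replace $\gamma^f_n$ by $\dsum_{0\le k<n}f(k)-\int_0^nf$, and use $\int_0^{+\infty}f-\int_0^nf=\int_n^{+\infty}f$; rearranging produces exactly (\ref{tag:stima_ridotte_monotone_convergenti}). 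The bound $|\varepsilon_1(n)|\le\dfrac12\pV(f,[n,+\infty[)$ is inherited verbatim from Theorem~\ref{Maclaurin1monotasint}, so no additional estimate is needed and the corollary is indeed an immediate consequence.
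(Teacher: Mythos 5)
Your proposal is correct, and it reaches both statements by a slightly different route than the paper, so a comparison is in order. For part (1) you and the paper do the same thing in outline: reduce the comparison to the convergence of $\gamma^f_n$ and then pass from integer endpoints to real endpoints of the integral using the finiteness of $f(\infty)$. The difference is that the paper compresses the second step into the single sentence ``Since $f(\infty)$ belongs to $\mathbb R$, the value of $\lim_{n}\int_0^{n}f$ coincides with that of $\int_0^{+\infty}f$'', while you make it explicit: the dichotomy $f(\infty)\neq 0$ (both diverge) versus $f(\infty)=0$ with the tail bound $\bigl|\int_n^X f(x)\,dx\bigr|\le\sup_{[n,n+1]}|f|\to 0$. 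That is exactly the justification the paper leaves implicit, so here you are filling in a detail rather than deviating. For part (2) the routes genuinely differ in mechanics: the paper starts from the finite approximation formula \eqref{tag:stima_ridotte_monotone} of Corollary~\ref{coro:stimasummonotonic}, lets $N\to+\infty$ in \eqref{tag:woihfeo}, and concludes that $\varepsilon_1(n):=\lim_{N\to+\infty}\varepsilon_1(n,N)$ exists and inherits the bound $\frac12\pV(f,[n,+\infty[)$; you instead reuse the already-established estimate \eqref{tag:approx_eulero_monotonic} of Theorem~\ref{Maclaurin1monotasint}, identify $\gamma^f=\dsum_{k=0}^{\infty}f(k)-\int_0^{+\infty}f(x)\,dx$ by letting $n\to+\infty$ in the definition of $\gamma^f_n$, and then obtain \eqref{tag:stima_ridotte_monotone_convergenti} by pure algebra. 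The two arguments are equivalent in substance (the $\varepsilon_1(n)$ of the theorem is itself $\lim_{N}R(n,N)$, the same limit the paper takes in the corollary), but yours is more economical in that no new limiting argument is required once the theorem is in hand, whereas the paper's version never needs the explicit identification of $\gamma^f$ in terms of the sum and the integral. Both yield the same remainder bound, so the corollary holds as stated either way.
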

\begin{proof}
1. We know from Theorem~\ref{Maclaurin1monotasint} that
\[\gamma^f=\lim_{n\to\infty}\left(\dsum_{0\le k<n}f(k)-\int_0^n\!\!f(x)\,dx\right)\in\mathbb R.\]
Thus $\dsum_{k=0}^{\infty}f(k)$ and the limit $\displaystyle\lim_{\substack{n\to +\infty\\n\in\mathbb N}}\int_0^{n}f(x)\,dx$ have the same behavior. Since $f(\infty)$ belongs to $\mathbb R$, the value of $\displaystyle\lim_{\substack{n\to +\infty\\n\in\mathbb N}}\int_0^{n}f(x)\,dx$ coincides with that of $\displaystyle\int_0^{+\infty}f(x)\,dx$: the conclusion follows.

\noindent
2.
It follows from (\ref{tag:stima_ridotte_monotone}) that for every $N\ge n$ we have
\begin{equation}\label{tag:woihfeo}\dsum_{0\le k<N}f(k)=\dsum_{0\le k<n}f(k)+\int_n^Nf(x)\,dx-\dfrac12\left[f\,\right]_n^N+\varepsilon_1(n,N),\end{equation}
with
$|\varepsilon_1(n,N)|\le \dfrac12\pV(f,[n,N])\le  \dfrac12\pV(f,[n,+\infty[)$.
From Point 1. we know that $f$ is integrable in a generalized sense on $[0, +\infty[$. Passing to the limit for $N\to +\infty$ in (\ref{tag:woihfeo}) we deduce that $\varepsilon_1(n):=\displaystyle\lim_{N\to +\infty}\varepsilon_1(n,N)$ is finite, whence the validity of  (\ref{tag:stima_ridotte_monotone_convergenti}).
\end{proof}
\begin{remark} The approximation formula \eqref{tag:stima_ridotte_monotone_convergenti} was established, for a wider class of functions and with an explicit form of the reminder, in \cite{TrigubP}, \cite[4.1.5]{TrigubB} by means of Fourier analysis methods.
\end{remark}
\subsection{Asymptotic formulas}
\begin{theorem}[Asymptotic formulas]\label{coro:stimaasint_ordine1_monotona}

Let $f:[0, +\infty[\to \mathbb R$ be a function.
\begin{enumerate}
 \item If $f$ is of bounded variation,
 then for every $n\in\mathbb N$
\[
\label{tag:asymptotic_monotoinic3435}
\dsum_{0\le k<n}f(k)=\gamma^f+\int_0^nf(x)\,dx+\varepsilon'_1(n),\qquad
|\varepsilon'_1(n)|\le \pV(f,[n,+\infty[).
\]
\item If $f$ is monotonic and unbounded then for every $n\in\mathbb N$ we have

\[
\dsum_{0\le k<n}f(k)=\int_0^nf(x)\,dx+O\left(f(n)\right)\quad n\to +\infty;
\]
\end{enumerate}
\end{theorem}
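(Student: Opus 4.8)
The plan is to derive both statements directly from the results already established, handling the bounded-variation case (Part 1) and the unbounded monotonic case (Part 2) separately, since the Euler constant $\gamma^f$ is only available in the former.

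For Part 1, I would start from the definition $\gamma_n^f=\dsum_{0\le k<n}f(k)-\int_0^nf(x)\,dx$ together with the identity furnished by Theorem~\ref{Maclaurin1monotasint}, namely $\gamma^f=\gamma_n^f-\dfrac12\left[f\,\right]_n^{\infty}+\varepsilon_1(n)$ with $|\varepsilon_1(n)|\le\dfrac12\pV(f,[n,+\infty[)$. Solving for $\gamma_n^f$ and substituting into the definition gives
\[
\dsum_{0\le k<n}f(k)=\gamma^f+\int_0^nf(x)\,dx+\varepsilon'_1(n),\qquad \varepsilon'_1(n):=\dfrac12\left[f\,\right]_n^{\infty}-\varepsilon_1(n).
\]
It then remains to estimate $\varepsilon'_1(n)$. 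The contribution $\varepsilon_1(n)$ is already controlled by $\dfrac12\pV(f,[n,+\infty[)$; for the remaining term I would note that $\left[f\,\right]_n^{\infty}=f(\infty)-f(n)$ and that, since $|f(x)-f(n)|\le\pV(f,[n,x])\le\pV(f,[n,+\infty[)$ for every $x>n$, letting $x\to+\infty$ yields $|f(\infty)-f(n)|\le\pV(f,[n,+\infty[)$. Adding the two bounds gives $|\varepsilon'_1(n)|\le\pV(f,[n,+\infty[)$, as required.

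For Part 2 the function is unbounded, hence not of bounded variation on the whole half-line, so $\gamma^f$ need not exist and the asymptotic machinery of Theorem~\ref{Maclaurin1monotasint} is unavailable; recognizing this, and reverting instead to the finite-interval formula, is the main conceptual point. I would apply Theorem~\ref{remark:critint4} on $[0,n]$, which is legitimate because a monotonic function is of bounded variation on every bounded interval with $\pV(f,[0,n])=|f(n)-f(0)|$. This yields
\[
\dsum_{0\le k<n}f(k)=\int_0^nf(x)\,dx-\dfrac12\left[f\,\right]_0^n+R,\qquad |R|\le\dfrac12|f(n)-f(0)|.
\]
The total error $-\dfrac12(f(n)-f(0))+R$ is then bounded in absolute value by $|f(n)-f(0)|\le|f(n)|+|f(0)|$. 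Finally, since $f$ is monotonic and unbounded we have $|f(n)|\to+\infty$, so $|f(0)|$ becomes negligible relative to $|f(n)|$ and the error term is $O(f(n))$ as $n\to+\infty$, which completes the proof.
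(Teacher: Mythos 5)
Your proof is correct and follows essentially the same route as the paper: Part 1 rearranges the estimate of Theorem~\ref{Maclaurin1monotasint} by setting $\varepsilon'_1(n):=\frac12\left[f\,\right]_n^{\infty}-\varepsilon_1(n)$, and Part 2 applies Theorem~\ref{remark:critint4} together with Remark~\ref{rem:monotonic} on $[0,n]$ and absorbs the error into $O(f(n))$ using $|f(n)|\to+\infty$. The only difference is that you make explicit the bound $|f(\infty)-f(n)|\le\pV(f,[n,+\infty[)$, which the paper leaves implicit.
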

\begin{proof}1.
From (\ref{tag:approx_eulero_monotonic}) we obtain
\[\gamma^f_n=\gamma^f+\varepsilon'_1(n),\]
where $\varepsilon'_1(n):=\dfrac12\left[f\,\right]_n^{\infty}-\varepsilon_1(n)$
and since $|\varepsilon_1(n)| \le \dfrac{1}{2}\pV(f,[n, +\infty[)$,  the following estimate holds
\[|\varepsilon'_1(n)|=\left|\dfrac12\left[f\,\right]_n^{\infty}-\varepsilon_1(n)\right|\le \pV(f,[n, +\infty[):\]
the conclusion follows.

\noindent
2. It follows from Theorem~\ref{remark:critint4}, together with Remark~\ref{rem:monotonic}, that for every $n\in\mathbb N$
\[\dsum_{0\le k<n}f(k)=\int_0^nf(x)\,dx-\dfrac12(f(n)-f(0))+ R(n),\]
with $|R(n)|\le \dfrac12|f(n)-f(0)|$.
Since $\displaystyle\lim_{n\to +\infty}f(n)=\pm\infty$, then
\[f(n)-f(0)= O(f(n))\qquad n\to +\infty,\]
whence $-\dfrac12(f(n)-f(0))+ R(n)=O(f(n))$ for $n\to +\infty$: the conclusion follows.
\end{proof}
\section{The Euler-Maclaurin formula for BV functions: a more measure theoretic look}\label{sec:3}
\subsection{Variation and point variation}
A function of locally bounded variation (i.e. of bounded variation on every bounded interval) $f:\R\to\R$ provides a finite signed measure $\mu_f$ on the $\s-$algebra of Borel subsets
of any  subinterval of $\R$ on which $f$ is bounded, in particular on any bounded interval. Denoting by $f(x^-)$ (resp. $f(x^+)$) the left (resp. right) limit of $f$ at a point $x$, the measures of  bounded  intervals with end-points $c<d$  are:
 \[\mu_f\big(]c,d[\big)=f(d^-)-f(c^+),\, \mu_f\big([c,d]\big)=f(d^+)-f(c^-),\]
 \[\mu_f\big([c,d[\big)=f(d^-)-f(c^-),\,\mu_f\big(]c,d]\big)=f(d^+)-f(c^+),\]
and for $c=d$ we have  $\mu_f\big(\{c\}\big)=f(c^+)-f(c^-)$, the jump of $f$ at $c$.
As for every signed measure  the {\em total variation measure} $|\mu_f|$ of the Borel set $E$ is
\[|\mu_f|(E)=\sup\left\{\sum_{k=1}^m|\mu_f(A_k)|: \,A_1,\dots,A_m\subseteq E \text{ disjoint and Borel}\right\}.\]
When $E$ is an   interval one can prove that the same supremum is obtained if  $A_1,\dots,A_m$ range only over subintervals of $E$, so that, if  $E$ is an interval
\begin{multline*}|\mu_f|(E)\!=\!\sup\left\{\sum_{k=1}^m|\mu_f\big(]x_{k-1}, x_k[\big)|+\sum_{k=0}^m|\mu_f\big(\{x_k\}\big)|:\,x_k\in E,\,x_0<\dots<x_m\right\}\\
=\sup\left\{\sum_{k=1}^m|f(x_k^-)-f(x_{k-1}^+)|+\sum_{k=0}^m|f(x_k^+)-f(x_k^-)|:\,x_k\in E,\,x_0<\dots<x_m\right\}.\end{multline*}
If, moreover,  $E$ is  open bounded then $|\mu_f|(E)$ coincides with   the \emph{variation} $V(f, E)$ of $f$ on $E$ \cite{FP}, given by
\[V(f, E):=\sup\left\{\int_Ef(x)\phi'(x)\,dx:\, \phi\in C^1_{\text{c}}(E),\,|\phi|\le1\right\}.\]
 If $f:\R\to\R$ is locally BV it is convenient to introduce the function
\[\rho_f(x):=|f(x^+)-f(x)|+|f(x)-f(x^-)|-|f(x^+)-f(x^-)|\,\quad\forall x\in\mathbb R.\]
Notice that $\rho_f(x)$ equals twice the distance from $f(x)$ to the interval whose end-points are $f(x^-), f(x^+)$.

\noindent
Here is  how the pointwise variation of a BV function on a bounded \emph{open} interval is related to its variation.
\begin{proposition}\label{prop:pVlambda} Let $f:\R\to\R$ be locally of bounded variation.
Then for every bounded open interval $E$:
\[\label{tag:pvv}\pV(f,E)=|\mu_f|(E)+\sum_{x\in E}\rho_f(x).
\]
\end{proposition}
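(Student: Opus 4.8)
The plan is to establish the two inequalities $\pV(f,E)\ge |\mu_f|(E)+\sum_{x\in E}\rho_f(x)$ and $\pV(f,E)\le |\mu_f|(E)+\sum_{x\in E}\rho_f(x)$ separately. Both directions rest on the elementary identity
\[|f(x^+)-f(x)|+|f(x)-f(x^-)|=|f(x^+)-f(x^-)|+\rho_f(x),\]
which is just the definition of $\rho_f$ rearranged; it quantifies the extra variation that a pointwise partition picks up by passing through the actual value $f(x)$ instead of jumping directly from $f(x^-)$ to $f(x^+)$. I would also note at the outset that, $f$ being BV on the bounded interval $E$, one has $\pV(f,E)<\infty$, so every sum below is finite and $\sum_{x\in E}\rho_f(x)$ is an unconditionally convergent nonnegative series whose value is the supremum of its finite partial sums.

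For the upper bound I would take an arbitrary partition $t_0<\dots<t_n$ of points of $E$ and, for each consecutive pair, insert the one-sided limits by writing $f(t_{i+1})-f(t_i)=(f(t_{i+1})-f(t_{i+1}^-))+(f(t_{i+1}^-)-f(t_i^+))+(f(t_i^+)-f(t_i))$ and applying the triangle inequality. Summing over $i$ and regrouping the half-jump terms by partition point, each interior $t_j$ contributes $|f(t_j)-f(t_j^-)|+|f(t_j^+)-f(t_j)|$, which by the identity equals $|f(t_j^+)-f(t_j^-)|+\rho_f(t_j)$; the two endpoints contribute a single half-jump each, each of which is in turn bounded by $|f(t_j^+)-f(t_j^-)|+\rho_f(t_j)$. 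The surviving gap terms $|f(t_{i+1}^-)-f(t_i^+)|$ together with the jumps $|f(t_j^+)-f(t_j^-)|$ reconstitute exactly the expression under the supremum defining $|\mu_f|(E)$, while $\sum_j\rho_f(t_j)\le\sum_{x\in E}\rho_f(x)$; taking the supremum over partitions yields $\pV(f,E)\le|\mu_f|(E)+\sum_{x\in E}\rho_f(x)$.

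For the lower bound I would run this in reverse. Given a configuration $x_0<\dots<x_m$ in $E$, choose $\delta>0$ small and form the refined partition $x_0-\delta,x_0,x_0+\delta,\dots,x_m-\delta,x_m,x_m+\delta$; as $\delta\to0^+$ the values at $x_k\mp\delta$ tend to $f(x_k^\mp)$, so the pointwise-variation sum of this partition converges to $\sum_{k}(|f(x_k^+)-f(x_k^-)|+\rho_f(x_k))+\sum_{k\ge1}|f(x_k^-)-f(x_{k-1}^+)|$. Since each partial-variation sum is $\le\pV(f,E)$, so is the limit, giving $\pV(f,E)\ge(\text{the }|\mu_f|\text{-sum for }x_0<\dots<x_m)+\sum_k\rho_f(x_k)$. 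To finish I would fix $\eps>0$, pick one configuration making the $|\mu_f|$-sum within $\eps$ of $|\mu_f|(E)$ and a finite set of jump points making the $\rho$-sum within $\eps$ of $\sum_{x\in E}\rho_f(x)$, and then pass to their union: a triangle-inequality check shows that adjoining a point never decreases the $|\mu_f|$-sum (refinement monotonicity), while $\rho_f\ge0$ ensures the $\rho$-sum only grows, so the union controls both quantities simultaneously and letting $\eps\to0$ closes the inequality.

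I expect the main obstacle to be precisely this last simultaneous-approximation step in the lower bound: the two suprema are a priori approached by different point configurations, and the argument only works because refining a configuration cannot lower the $|\mu_f|$-sum. A secondary point requiring care is the bookkeeping of the endpoint half-jumps in the upper bound, which produce the somewhat wasteful but harmless estimate $|f(t_0^+)-f(t_0)|\le|f(t_0^+)-f(t_0^-)|+\rho_f(t_0)$ (and symmetrically at $t_n$).
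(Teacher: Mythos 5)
Your proof is correct and rests on exactly the same ingredients as the paper's: the rearranged identity $|f(x^+)-f(x)|+|f(x)-f(x^-)|=|f(x^+)-f(x^-)|+\rho_f(x)$, the insertion of auxiliary points tending to the one-sided limits (possible because $E$ is open), the interval characterization of $|\mu_f|(E)$, and refinement monotonicity. The only difference is organizational: the paper runs a single sandwich chain $\pV(f,E)-\varepsilon < S \le \pV(f,E)$ starting from a near-optimal partition for $\pV(f,E)$ and then takes suprema, whereas you prove the two inequalities separately and spell out the simultaneous-approximation step (union of configurations plus monotonicity) that the paper compresses into ``taking suprema \dots easily gives.''
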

\begin{proof} Given $\varepsilon>0$ we can find $x_0<x_1<\dots< x_m$ in $E$ such that
\[\pV(f,E)-\varepsilon<\sum_{k=1}^m|f(x_k)-f(x_{k-1})|;\]
now for every $k\in\{0,\dots,m\}$ we pick $x'_k,\,x''_k\in E$ such that
\[x'_0<x_0;\,x_m<x''_m; \quad x_{k-1}<x''_{k-1}<x'_k<x_k\]
for every $k=1,\dots,m$. Consider now the set $\{x'_k,x_k,x''_k:\, k=0,\dots,m\}$;
by the triangular inequality we get
\[\begin{aligned}\pV(f,E)-\varepsilon&<\displaystyle\sum_{k=1}^m|f(x_k)-f(x_{k-1})|\\
&\le\sum_{k=0}^m(|f(x_k)-f(x'_k)|+|f(x''_k)-f(x_k)|)+\sum_{k=1}^m|f(x'_k)-f(x''_{k-1})|\\&\le\pV(f,E);\end{aligned}\]
taking limits in the preceding inequality as $x'_k$ increases to $x_k$ and $x''_k$ decreases to $x_k$ we get
\[\begin{aligned}\pV(f,E)-\varepsilon&<\displaystyle\sum_{k=0}^m(|f(x_k)-f(x^-_k)|+|f(x^+_k)-f(x_k)|)+\sum_{k=1}^m|f(x^-_k)-f(x^+_{k-1})|\\
&\le\pV(f,E),\end{aligned}\]
which immediately yields
\[\begin{aligned}\pV(f,E)-\varepsilon&<\displaystyle\left(\sum_{k=1}^m|f(x^-_k)-f(x^+_{k-1})|+\sum_{k=0}^m|f(x_k^+)-f(x_k^-)|\right)+\sum_{k=0}^m\rho_f(x_k)\\
&\le\pV(f,E);\end{aligned}\] taking suprema on $\{x_0,\dots,x_m\}$ this easily gives
\[\pV(f,E)-\varepsilon<|\mu_f|(E)+\dsum_{x\in E}\rho_f(x)\le\pV(f,E),\]
 and ends the proof.
\end{proof}
\begin{remark}\label{nota} Notice that the claim of Proposition~\ref{prop:pVlambda} does not hold, in general, if $E$ is not open.
It is easy to see that for a {\em compact} interval $[a,b]$ ($a<b$) we have
\[\begin{aligned}\pV(f,[a,b])&=\pV(f,]a,b[)+|f(a)-f(a^+)|+|f(b)-f(b^-)|\\
&=|\mu_f|(]a,b[)+\sum_{x\in]a,b[}\rho_f(x)+|f(a)-f(a^+)|+|f(b)-f(b^-)|.\end{aligned}\]
This proves actually that $\pV(f,I)$ and $|\mu_f|(I)$ coincide for every bounded interval $I$ if and only if $f$ is continuous; thus $\pV(f,I)$ gives rise to a measure if and only if $f$ is continuous.
\end{remark}

\subsection{The Euler-Mac Laurin formula}
Let $f\in\ope{BV}_{\rm loc}(\R)$. The {\em mid-value modification $f_m$} for $f$ is the function defined by
\[f_{m}(x):=\dfrac{f(x^-)+f(x^+)}2.\]
The following version of the integration by parts formula for BV functions will be used in the sequel.
\begin{lemma}[Integration by parts for BV functions]
If $f,\,g:\mathbb R\to\R$ are locally of bounded variation then, for every $a<b$:
\begin{equation}\label{tag:parts}\int_{[a,b[}g_m(x)\,d\mu_f(x)=g(b^-)f(b^-)-g(a^-)f(a^-)-
\int_{[a,b[}f_m(x)\,d\mu_g(x).\end{equation}
\end{lemma}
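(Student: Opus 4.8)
The plan is to reduce everything to the classical Fubini--Tonelli theorem applied to the product of the two Stieltjes measures on the square $S:=[a,b[\,\times\,[a,b[$. First I would record the total mass: since $\mu_f([a,b[)=f(b^-)-f(a^-)$ and likewise for $g$, the product measure satisfies $(\mu_f\otimes\mu_g)(S)=(f(b^-)-f(a^-))(g(b^-)-g(a^-))$. Because $\mu_f$ and $\mu_g$ are signed, I would first split each via the decomposition $f=f_1-f_2$, $g=g_1-g_2$ into monotone functions as in \eqref{tag:bv}, so that $\mu_{f_i},\mu_{g_j}$ are finite positive measures; Tonelli applies to each of the four products $\mu_{f_i}\otimes\mu_{g_j}$, and bilinearity reassembles the signed statement.

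Next I would decompose $S$ into the two open triangles $T_1=\{a\le y<x<b\}$, $T_2=\{a\le x<y<b\}$ and the diagonal $D=\{(x,x):a\le x<b\}$. Evaluating each piece by Fubini: for $T_1$ the $y$-slice over a fixed $x$ is $[a,x[$, whose $\mu_g$-mass is $g(x^-)-g(a^-)$, so $(\mu_f\otimes\mu_g)(T_1)=\int_{[a,b[}(g(x^-)-g(a^-))\,d\mu_f$; symmetrically $(\mu_f\otimes\mu_g)(T_2)=\int_{[a,b[}(f(x^-)-f(a^-))\,d\mu_g$; and the diagonal contributes only at common atoms, $(\mu_f\otimes\mu_g)(D)=\sum_{a\le x<b}(f(x^+)-f(x^-))(g(x^+)-g(x^-))$. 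Equating the sum of these three pieces to the total mass yields a ``one-sided-limit'' integration-by-parts identity.

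The remaining work is the bookkeeping that passes from $f(x^-)$ to the mid-value $f_m$. I would use $g_m(x)=g(x^-)+\tfrac12(g(x^+)-g(x^-))$ together with the observation that the jump function $x\mapsto g(x^+)-g(x^-)$ is supported on a countable set, whence $\int_{[a,b[}(g(x^+)-g(x^-))\,d\mu_f=\sum_{a\le x<b}(f(x^+)-f(x^-))(g(x^+)-g(x^-))$, the same symmetric atomic sum appearing on the diagonal. Replacing $g(x^-)$ by $g_m$ and $f(x^-)$ by $f_m$ thus absorbs exactly half of the diagonal term into each of the two integrals; adding the modified integrals recovers the full diagonal sum once, and a short expansion using $\mu_f([a,b[)=f(b^-)-f(a^-)$ and $\mu_g([a,b[)=g(b^-)-g(a^-)$ collapses the boundary terms to $g(b^-)f(b^-)-g(a^-)f(a^-)$, which is the claim.

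The main obstacle is the careful handling of the signed measures and of the half-open endpoint conventions: one must ensure that the Fubini slices and the atomic sums consistently use $\mu([a,x[)=\cdot(x^-)-\cdot(a^-)$, so that the left endpoint $a$ enters only through $f(a^-)$ and the right endpoint $b$ only through the limit $f(b^-)$, and that each rearrangement is legitimate --- which is guaranteed since every positive factor measure is finite on the bounded interval.
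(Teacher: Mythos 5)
Your proof is correct, and it runs on the same underlying engine as the paper's --- Fubini for the product of the two Lebesgue--Stieltjes measures on the square --- but it is organized quite differently. The paper does not carry out the Fubini computation itself: it invokes (the proof of) \cite[Theorem 3.36]{F} to obtain the two asymmetric one-sided identities
\begin{align*}
\int_{[a,b[}g(x^-)\,d\mu_f(x)&=g(b^-)f(b^-)-g(a^-)f(a^-)-\int_{[a,b[}f(x^+)\,d\mu_g(x),\\
\int_{[a,b[}g(x^+)\,d\mu_f(x)&=g(b^-)f(b^-)-g(a^-)f(a^-)-\int_{[a,b[}f(x^-)\,d\mu_g(x),
\end{align*}
and then simply averages them, using $\tfrac12\big(g(x^-)+g(x^+)\big)=g_m(x)$ and $\tfrac12\big(f(x^+)+f(x^-)\big)=f_m(x)$; in each of these pairings no diagonal term appears, because each identity corresponds to splitting the square into two triangles, exactly one of which carries the diagonal. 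You instead take both triangles open and treat the diagonal separately, which produces the symmetric ``left-left'' identity
\[\int_{[a,b[}g(x^-)\,d\mu_f(x)+\int_{[a,b[}f(x^-)\,d\mu_g(x)=g(b^-)f(b^-)-g(a^-)f(a^-)-\sum_{a\le x<b}\mu_f(\{x\})\,\mu_g(\{x\}),\]
and you must then do the extra atomic bookkeeping $\int_{[a,b[}\big(g(x^+)-g(x^-)\big)\,d\mu_f(x)=\sum_{a\le x<b}\mu_f(\{x\})\,\mu_g(\{x\})$ to absorb the diagonal when passing from $g(x^-)$ to $g_m$ and from $f(x^-)$ to $f_m$; your algebra there checks out. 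What your route buys is self-containedness: you make explicit both the triangles-plus-diagonal decomposition and the reduction of the signed case to the four products of finite positive Stieltjes measures via \eqref{tag:bv}, steps the paper leaves inside the citation. What the paper's route buys is brevity and transparency: averaging the two one-sided formulas makes the appearance of the mid-value modifications immediate, with no diagonal sum to track.
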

\begin{proof} By following the lines of the proof of \cite[Theorem 3.36]{F} one gets
\[\int_{[a,b[}g(x^-)\,d\mu_f(x)=g(b^-)f(b^-)-g(a^-)f(a^-)-
\int_{[a,b[}f(x^+)\,d\mu_g(x),\]
\[\int_{[a,b[}g(x^+)\,d\mu_f(x)=g(b^-)f(b^-)-g(a^-)f(a^-)-
\int_{[a,b[}f(x^-)\,d\mu_g(x).\]
The result is obtained by summing up term by term the members of the above equalities, and dividing by 2.
\end{proof}
The following Euler-Maclaurin  formula for the sums $\displaystyle\sum_{a\le k<b}f_m(k)$ holds: differently from the classical one, the sums involve the mid-value modification of $f$, due to its  possible discontinuities.
The first Bernoulli polynomial $B_1(x)=x-\dfrac12$, restricted to $[0,1]$, is involved in the first-order Euler-Maclaurin formula for smooth functions \cite[Theorem 12.27]{MT}; we will use  here the mid-value modification of its extension by periodicity $\beta_1:\R\to\R$ defined by
\[\beta_1(x):=\begin{cases}B_1(x-[x])&\text{ if }x\notin \Z,\\
0&\text{ otherwise}.
\end{cases}\]
\begin{theorem}[First-order Euler-Maclaurin formula for BV functions]\label{thm:emmid} Assume that $f:\R\to\R$ is locally of bounded variation. Then, for any $a<b$ in $\mathbb Z$,
\begin{equation}\label{tag:nuovo}\sum_{a\le k<b}f_m(k)=\int_a^bf(x)\,dx-\dfrac12(f(b^-)-f(a^-))+\int_{]a,b[}\beta_1(x)\,d\mu_f(x).
\end{equation}
\end{theorem}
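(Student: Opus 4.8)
The plan is to apply the integration by parts formula \eqref{tag:parts} with the single fixed choice $g=\beta_1$, exploiting the fact that the Lebesgue–Stieltjes measure generated by $\beta_1$ is exactly Lebesgue measure minus unit masses at the integers; this is precisely the mechanism that turns an integral into a sum over integer points.

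First I would compute the measure $\mu_{\beta_1}$ associated to $g=\beta_1$. On each open interval $]k,k+1[$ the function $\beta_1$ is affine with slope $1$, so its absolutely continuous part is Lebesgue measure; at each integer $k$ one has $\beta_1(k^-)=\tfrac12$ and $\beta_1(k^+)=-\tfrac12$, giving a jump $\mu_{\beta_1}(\{k\})=\beta_1(k^+)-\beta_1(k^-)=-1$. Hence
\[d\mu_{\beta_1}(x)=dx-\sum_{k\in\Z}\de_k,\]
where $\de_k$ denotes the Dirac mass at $k$. I would also record that $\beta_1$ coincides with its own mid-value modification: at an integer $k$ we get $(\beta_1)_m(k)=\tfrac12(\beta_1(k^-)+\beta_1(k^+))=0=\beta_1(k)$, while off $\Z$ the function is continuous, so $(\beta_1)_m=\beta_1$ everywhere.

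Next I would substitute $g=\beta_1$ into \eqref{tag:parts}. Since $a,b\in\Z$ we have $\beta_1(a^-)=\beta_1(b^-)=\tfrac12$, so the boundary term $\beta_1(b^-)f(b^-)-\beta_1(a^-)f(a^-)$ collapses to $\tfrac12(f(b^-)-f(a^-))$. The remaining integral I would split against the two parts of $\mu_{\beta_1}$:
\[\int_{[a,b[}f_m(x)\,d\mu_{\beta_1}(x)=\int_a^b f(x)\,dx-\sum_{a\le k<b}f_m(k),\]
using $f_m=f$ almost everywhere (they differ only on the countable jump set) for the Lebesgue part, and $\Z\cap[a,b[=\{a,\dots,b-1\}$ for the atomic part. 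Rearranging \eqref{tag:parts} then gives \eqref{tag:nuovo} at once.

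The only genuinely delicate points are matters of bookkeeping, which I expect to be the chief source of error rather than any real obstacle. One must pin down the atomic part of $\mu_{\beta_1}$ correctly, tracking the sign and magnitude ($-1$) of each jump, and one must reconcile the endpoint conventions: the left-hand side of \eqref{tag:parts} is an integral over $[a,b[$, whereas \eqref{tag:nuovo} integrates over $]a,b[$. These agree because the atom of $\mu_f$ at $a$ is weighted by $(\beta_1)_m(a)=\beta_1(a)=0$, so including or excluding the endpoint $a$ changes nothing. With these verifications the identity follows with no estimation required.
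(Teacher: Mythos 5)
Your proposal is correct and follows essentially the same route as the paper's own proof: both apply the integration by parts formula \eqref{tag:parts} with $g=\beta_1$, use $\mu_{\beta_1}=\lambda_1-\sum_{k\in\Z}\delta_k$ together with $(\beta_1)_m=\beta_1$ and $\beta_1(a^-)=\beta_1(b^-)=\tfrac12$, and pass from $[a,b[$ to $]a,b[$ via $\beta_1(a)=0$. Your bookkeeping (the jump $-1$ at each integer, $f_m=f$ Lebesgue-a.e.) matches the paper's computation exactly.
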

\begin{proof} The proof of Theorem~\ref{thm:emmid} goes formally as that of the classical first-order Euler-Maclaurin formula.
Clearly $\b_1$ is locally of bounded variation;
plainly $\mu_{\beta_1}=\lambda_1-\displaystyle\sum_{n\in\Z}\de_{n}$, where $\lambda_1$ is the Lebesgue measure. Since $(\beta_1)_m=\beta_1$,
applying formula (\ref{tag:parts}) with $g=\b_1$  we get
\[\begin{aligned}\int_{[a,b[}\b_1(x)\,d\mu_f(x)&=\b_1(b^-)\,f(b^-)-\b_1(a^-)\,f(a^-)-\int_{[a,b]}f_m(x)\,d\mu_{\beta_1}(x)\\
&=\dfrac{f(b^-)-f(a^-)}2-\int_a^bf(x)\,dx+\int_{[a,b[}f_m(x)\,d\left(\displaystyle\sum_{k\in\Z}\de_{k}\right)(x)\\
&=\dfrac{f(b^-)-f(a^-)}2-\int_a^bf(x)\,dx+\sum_{a\le k<b}f_m(k),\end{aligned}\]
which we can rewrite
\[\sum_{a\le k<b}f_m(k)=\int_a^bf(x)\,dx-\dfrac{f(b^-)-f(a^-)}2+\int_{[a,b[}\beta_1(x)\,d\mu_f(x);\]
since  $\b_1(a)=0$, we get  $\displaystyle\int_{[a,b[}\beta_1(x)\,d\mu_f(x)=\int_{]a,b[}\beta_1(x)\,d\mu_f(x)$.
\end{proof}
Theorem~\ref{thm:emmid} yields an alternative proof of (\ref{tag:critint_new}).
\begin{proof}[Alternative proof of Theorem~\ref{remark:critint4}.]
To deduce \eqref{tag:critint_new} from the preceding theorem we rewrite
\begin{equation}\notag\sum_{a\le k<b}f(k)=\int_a^bf(x)\,dx-\dfrac{f(b)-f(a)}2+R,\end{equation}
\[\begin{aligned}
 R:&=\int_{]a,b[}\b_1(x)\,d\mu_f(x)+\sum_{a\le k<b}f(k)-\sum_{a\le k<b}f_m(k)+\dfrac12\left[f\right]_a^b-\dfrac{f(b^-)-f(a^-)}2\\
 &=\int_{]a,b[}\b_1(x)\,d\mu_f(x)+\dfrac12\sum_{a<k<b}\big((f(k)-f(k^-))+(f(k)-f(k^+))\big)+\\
 &\phantom{AAAAAAAAAAAAAAAAAAAA}+\dfrac12\big((f(a)-f(a^+))+(f(b)-f(b^-))\big).
 \end{aligned}\]
 so that
 \begin{multline}\label{tag:B}
 |R|\le  \left|\int_{]a,b[}\b_1(x)\,d\mu_f(x)\right|+\dfrac12\sum_{a<k<b}\big(|f(k)-f(k^-)|+|f(k)-f(k^+)|\big)+\\
+\dfrac12\big(|f(a)-f(a^+)|+|f(b)-f(b^-)|\big).
 \end{multline}
 Now, since $\displaystyle\int_{]a,b[}|\b_1(x)|\,d|\mu_f|(x)$ lacks the contribution of the jumps of $f$ on the integers and $|\b_1|\le 1/2$,
 \[\begin{aligned}\int_{]a,b[}|\b_1(x)|\,d|\mu_f|(x)&\le \dfrac12|\mu_f|\big(]a,b[\setminus\mathbb Z\big)\\
 &= \dfrac12|\mu_f|\big(]a,b[\big)-\dfrac12\sum_{a< k<b}{|f(k^+)-f(k^-)|}.
 \end{aligned}\]
 It follows from \eqref{tag:B} and Proposition~\ref{prop:pVlambda}, taking account of Remark \ref{nota}, that
 \[\begin{aligned}|R|&\le \dfrac12\Big(|\mu_f|\big(]a,b[\big)+\sum_{a<k<b}\rho_f(k)\Big)+\dfrac12\big(|f(a)-f(a^+)|+|f(b)-f(b^-)|\big)\\
 &\le
 \pV(f, ]a,b[)
+\dfrac12\big(|f(b)-f(b^-)|+|f(a^+)-f(a)|\big)=\dfrac12\pV(f, [a,b]).
 \end{aligned}\]
\end{proof}

\section*{References}
\bibliographystyle{elsarticle-num}
\bibliography{bibeulermac}
\end{document}